\newtheorem{thm}{Theorem}[section]
\newtheorem{prop}[thm]{Proposition}
\newtheorem{rem}[thm]{Remark}
\numberwithin{equation}{section}
\numberwithin{equation}{section}
\def\GL{{\operatorname {GL}}}
\def\SO{{\operatorname{SO}}}
\def\U{{\operatorname {U}}}
\def\Re{{\operatorname {Re}}}
\def\leq{\leqslant}
\def\geq{\geqslant}
\def\le{\leq}
\def\ge{\geq}
\def\1{{\bold 1}}
\renewcommand{\a}{\alpha}
\newcommand{\s}{\sigma}
\newcommand{\Ecal}{{\mathcal E}}
\newcommand{\Fcal}{{\mathcal F}}
\newcommand{\Ocal}{{\mathcal O}}
\newcommand{\CC}{\mathbb{C}}
\newcommand{\QQ}{\mathbb{Q}}
\newcommand{\RR}{\mathbb{R}}
\newcommand{\Sp}{\operatorname{Sp}}
\renewcommand{\Re}{\operatorname{Re}}
\newcommand{\supp}{\operatorname{supp}}
\title[Weighted one-level density of low-lying zeros of $L(s,\chi)$]{Weighted one-level density of low-lying zeros of Dirichlet $L$-functions}
\author[Sugiyama]{Shingo Sugiyama}
\address{Department of Mathematics, College of Science and Technology, Nihon University, Suruga-Dai, Kanda, Chiyoda, Tokyo 101-8308, Japan}
\email{sugiyama.shingo@nihon-u.ac.jp}
\author[Suriajaya]{Ade Irma Suriajaya}
\address{Faculty of Mathematics, Kyushu University, 744 Motooka, Nishi-ku, Fukuoka 819-0395, Japan}
\email{adeirmasuriajaya@math.kyushu-u.ac.jp}
\keywords{Dirichlet $L$-functions, weighted one-level density, low-lying zeros, random matrix theory}
\subjclass[2020]{Primary 11M50; Secondary 11M06, 11M20, 11M26}
\begin{document}

\begin{abstract}
In this paper, we compute the one-level density
of low-lying zeros of Dirichlet $L$-functions in a family weighted by special values of Dirichlet $L$-functions at a fixed $s \in [1/2, 1)$.
We verify both Fazzari's conjecture
and the first author's conjecture on the weighted one-level density for our family of $L$-functions.
\end{abstract}

\maketitle

\section{Introduction}

The Riemann zeta function $\zeta(s)$ is among the most interesting objects in number theory for its close relation to the distribution of prime numbers, which are fundamental objects in number theory. In particular, zeros of $\zeta(s)$ detect prime numbers and the study of zeros of $\zeta(s)$ is inevitable in order to understand the distribution of prime numbers.
For example, the famous Riemann hypothesis, which asserts that $\Re(\rho)=1/2$ should hold for all zeros $\rho$ of $\zeta(s)$ lying in the critical strip $0<\Re(s)<1$, called non-trivial zeros of $\zeta(s)$, is among the most important unsolved problems in mathematics.
One interpretation of the Riemann hypothesis links non-trivial zeros of $\zeta(s)$ to eigenvalues of a certain operator.
More precisely, it is said that the Riemann hypothesis is implied by the
Hilbert-P\'olya conjecture, 
which asserts the existence of a determinant expression of $\zeta(s)$
using a Hamiltonian $H$ in the form of $\zeta(1/2+it)=``\det(t\, {\rm id}-H)"$.
In 1973, Montgomery \cite{Montgomery} gave evidence of the Hilbert-P\'olya conjecture, and later Odlyzko \cite{Odlyzko} gave supporting numerical data.
The so-called Montgomery-Odlyzko law predicts
that non-trivial zeros of $\zeta(s)$ are
distributed like eigenvalues of random Hermitian matrices in the Gaussian Unitary Ensemble.
At the end of the twentieth century,
Katz and Sarnak \cite{KS1,KS2} shed light on a family of $L$-functions instead of an individual $L$-function in order to relate zeros of $L$-functions
to eigenvalues of random matrices.
The Katz-Sarnak philosophy (or called the density conjecture)
predicts that the distribution of low-lying zeros of $L$-functions in a family
is similar to that of the eigenvalues of random matrices, and that the family of $L$-functions has one of five symmetry types
$\U$, $\Sp$, $\SO({\rm even})$, $\SO({\rm odd})$ and $\rm O$
(unitary, symplectic, even orthogonal, odd orthogonal and orthogonal) in accordance with the density of low-lying zeros of $L$-functions in the family.
Shortly thereafter, Iwaniec, Luo and Sarnak \cite{ILS}
confirmed the density conjecture
for the case of automorphic $L$-functions
attached to elliptic modular forms and their symmetric square lifts.

Motivated by the work of Knightly and Reno \cite{KnightlyReno} in 2019,
the one-level density for a family of $L$-functions {\it weighted by $L$-values} has been studied
in the context of random matrix theory.
Knightly and Reno \cite{KnightlyReno} discovered the phenomenon that
the symmetry type of a family of automorphic $L$-functions attached to elliptic modular forms changes {\it from orthogonal to symplectic} due to the weight factors of central $L$-values, by comparing
\cite{KnightlyReno} with the usual one-level density \cite{ILS}.
Their work was inspired by Kowalski, Saha and Tsimerman \cite{KST},
who treated the one-level density for a family of spinor $L$-functions attached to Siegel modular forms of degree $2$ weighted by Bessel periods, identical to central $L$-values by \cite{FurusawaMorimoto}.
We notice by comparing the weighted one-level density \cite{KST} and the usual one-level density \cite{KWY1,KWY2} that the symmetry type of the family of those spinor $L$-functions changes {\it from orthogonal to symplectic}.
Recently, the first author \cite{Sugiyama} observed
a new phenomenon occurring in a family of symmetric square $L$-functions attached to Hilbert modular forms.
He found that its symmetry type changes {\it from symplectic to
a new type} of density function which does not occur in random matrix theory as Katz and Sarnak predicted.
Afterwards,
Fazzari \cite{Fazzari} conjectured
that for a family of $L$-functions whose symmetry type is unitary, symplectic or even orthogonal,
the one-level density for the family weighted by central $L$-values should coincide with the density of
eigenvalues of random matrices
weighted by their characteristic polynomials.
He gave evidence of his conjecture
by studying the following three families of $L$-functions under the generalized Riemann hypothesis and the Ratios Conjecture:
$\{\zeta(s+ia)\}_{a \in \RR}$,
$\{L(s,\chi_d)\}_{d}$
and $\{L(s, \Delta\otimes \chi_d)\}_{d}$.
Here, $d$ is a fundamental discriminant, $\chi_d$ is the primitive quadratic Dirichlet character corresponding to
the quadratic field $\QQ(\sqrt{d})$ by class field theory,
and $\Delta$ is the delta function (a cusp form of weight $12$ and level $1$).
The first author's result \cite{Sugiyama} on the change of the symmetry type provides new evidence of Fazzari's conjecture.

In this paper, we consider the one-level density for a family of Dirichlet $L$-functions weighted by special values of Dirichlet $L$-functions at $s \in [1/2, 1)$.
We are interested to see whether the symmetry type of the family of Dirichlet $L$-functions under consideration changes.
For a prime number $q$, let $\Fcal_q$ be the set of all non-principal Dirichlet characters modulo $q$.
We consider the one-level density of low-lying zeros of
the non-completed Dirichlet $L$-function $L(s, \chi)$ attached to a Dirichlet character $\chi \in \Fcal_q$ defined as
\[D(\chi, \phi):= \sum_{\rho=1/2+i\gamma}\phi\left(\frac{\log q}{2\pi}\gamma\right),\]
where a test function $\phi$ is a Schwartz function on $\RR$
such that the support $\supp(\hat\phi)$ of the Fourier transform
$\hat\phi(\xi):=\int_{-\infty}^{\infty}\phi(x)e^{-2\pi i \xi x}dx$
of $\phi$ is compact, 
and $\rho$ runs over the set of all zeros of $L(s, \chi)$
in the critical strip $0<\Re(s)<1$ counted with multiplicity.
We remark that $\phi$ is naturally extended to a Paley-Wiener function on $\CC$. With this extension, the assumption of the generalized Riemann hypothesis is not necessary and we emphasize that we do not assume the generalized Riemann hypothesis throughout this paper. Thus $\gamma$ in the summation of $D(\chi, \phi)$ is not necessarily real.

We consider the weighted one-level density
of low-lying zeros of $L(s, \chi)$ for $\chi \in \Fcal_q$.
The usual one-level density for $\Fcal_q$ has been given by Hughes and Rudnick \cite[Theorem 3.1]{HughesRudnick} as follows.
\begin{thm} [Hughes and Rudnick, 2003]
For a Schwartz function $\phi$ on $\RR$ such that
$\supp(\hat\phi) \subset [-2, 2]$, we have
\[\frac{1}{\#\Fcal_q}\sum_{\chi\in \Fcal_q}D(\chi, \phi)
= \int_{-\infty}^{\infty}\phi(x)W_{\rm U}(x)dx +\Ocal\left(\frac{1}{\log q}\right), \quad q\rightarrow \infty\]
with
\[W_{\rm U}(x)=1,\]
where $q$ tends to infinity in the set of prime numbers.
Thus, the symmetry type of the family $\bigcup_{q}\{L(s, \chi)\mid \chi \in \Fcal_q\}$
is unitary.
\end{thm}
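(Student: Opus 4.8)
The plan is to apply the Guinand--Weil explicit formula to each $L(s,\chi)$, average over $\Fcal_q$, and collapse the resulting character sums by orthogonality. Since $q$ is prime, every $\chi\in\Fcal_q$ is primitive, $\#\Fcal_q=q-2$, and each $L(s,\chi)$ is entire, so there is no polar term. Writing $\mathfrak a(\chi)\in\{0,1\}$ for the parity of $\chi$ (i.e.\ $\chi(-1)=(-1)^{\mathfrak a(\chi)}$), the explicit formula, which holds unconditionally because $\phi$ extends to a Paley--Wiener function (so no hypothesis on the location of the zeros is used), reads
\begin{equation*}
D(\chi,\phi)=\frac{1}{\log q}\int_{-\infty}^{\infty}\phi(x)\left(\log\frac{q}{\pi}+\Re\frac{\Gamma'}{\Gamma}\!\left(\frac14+\frac{\mathfrak a(\chi)}{2}+\frac{\pi i x}{\log q}\right)\right)dx-\frac{2}{\log q}\sum_{n\ge 2}\frac{\Lambda(n)}{\sqrt n}\,\Re\!\left[\chi(n)\,\hat\phi\!\left(\frac{\log n}{\log q}\right)\right],
\end{equation*}
where $\Lambda$ is the von Mangoldt function and the first term is the archimedean contribution.

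\emph{Main term.} In the archimedean term, $\frac{1}{\log q}\int_{-\infty}^{\infty}\phi(x)\log\frac{q}{\pi}\,dx=\bigl(1-\tfrac{\log\pi}{\log q}\bigr)\hat\phi(0)=\hat\phi(0)+O(1/\log q)$, while $\frac{1}{\log q}\int_{-\infty}^{\infty}\phi(x)\,\Re\frac{\Gamma'}{\Gamma}\!\bigl(\tfrac14+\tfrac{\mathfrak a(\chi)}{2}+\tfrac{\pi i x}{\log q}\bigr)\,dx=O(1/\log q)$ uniformly in $\chi$, using the rapid decay of $\phi$ and $\frac{\Gamma'}{\Gamma}(\sigma+it)\ll_\sigma\log(2+|t|)$. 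Since $W_{\rm U}\equiv1$ gives $\int_{-\infty}^{\infty}\phi(x)W_{\rm U}(x)\,dx=\hat\phi(0)$, averaging the archimedean term over $\Fcal_q$ already produces the asserted main term, and it remains only to show that the averaged prime sum is $O(1/\log q)$.

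\emph{Prime sum and the crux.} Interchanging summations and using $\supp(\hat\phi)\subset[-2,2]$ to truncate at $n\le q^2$, the averaged prime sum equals
\begin{equation*}
\frac{2}{(q-2)\log q}\sum_{2\le n\le q^2}\frac{\Lambda(n)}{\sqrt n}\,\Re\!\left[\hat\phi\!\left(\frac{\log n}{\log q}\right)\sum_{\chi\in\Fcal_q}\chi(n)\right],\qquad\text{with}\quad\sum_{\chi\in\Fcal_q}\chi(n)=(q-1)\,\mathbf{1}_{n\equiv1\,(q)}-\mathbf{1}_{q\nmid n},
\end{equation*}
the identity being orthogonality of characters modulo $q$ with the principal character removed. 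The off-diagonal piece (from $-\mathbf{1}_{q\nmid n}$, coefficient $\asymp 1/q$) is $\ll\frac{1}{q\log q}\sum_{n\le q^2}\frac{\Lambda(n)}{\sqrt n}\ll\frac{1}{q\log q}\cdot q=\frac{1}{\log q}$ by Chebyshev's bound $\psi(x)\ll x$ and partial summation. The diagonal piece (from $(q-1)\mathbf{1}_{n\equiv1\,(q)}$, coefficient $1+O(1/q)$) runs over prime powers $n=p^k\le q^2$ with $n\equiv1\pmod q$: the terms with $k\ge3$ form an absolutely convergent series (hence $O(1)$), the case $k=2$ forces $p\equiv\pm1\pmod q$ with $p\le q$ and so contributes at most the single prime $p=q-1$, and for primes $p\equiv1\pmod q$ with $q<p\le q^2$ the Brun--Titchmarsh inequality $\pi(x;q,1)\ll\frac{x}{q\log(x/q)}$ together with partial summation gives $\sum_{q<p\le q^2,\ p\equiv1\,(q)}\frac{\log p}{\sqrt p}\ll1$; hence this piece is also $O(1/\log q)$, and the theorem follows. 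This diagonal estimate is the heart of the argument, and it is exactly what confines the admissible support to $[-2,2]$: with wider support one meets primes $p\equiv1\pmod q$ of size $p\asymp q^2$ and beyond, where the trivial bound $\pi(x;q,1)\le x/q+1$ no longer suffices, so the Brun--Titchmarsh input is essential. The rest is a routine unwinding of the explicit formula.
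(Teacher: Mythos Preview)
Your argument is correct and follows the standard route of Hughes and Rudnick: explicit formula, orthogonality of characters to isolate the diagonal $n\equiv 1\pmod q$, and Brun--Titchmarsh to bound the contribution of primes $p\equiv 1\pmod q$ up to $q^2$. Note, however, that the paper does not give its own proof of this statement---it is quoted as \cite[Theorem~3.1]{HughesRudnick} and used only as background for the weighted results---so there is nothing in the paper to compare against; your write-up is essentially a compressed version of the original Hughes--Rudnick proof.
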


We consider the one-level density for $\Fcal_q$ weighted by the square of the absolute values of central $L$-values.
We note that, by the asymptotic behavior of the second moment
\[\sum_{\chi \in \Fcal_{q}}|L(1/2, \chi)|^2 \sim \frac{(q-1)^2}{q}\log q, \qquad q \rightarrow \infty\]
of Dirichlet $L$-functions
due to Paley \cite[Theorem II]{Paley},
the second moment of Dirichlet $L$-functions is non-zero for any sufficiently large prime number $q$.
Our weighted one-level density result is stated as follows.
\begin{thm}\label{main by L(1/2)}
Let $\phi$ be a Schwartz function on $\RR$
such that
$\supp(\hat\phi) \subset [-1/3, 1/3]$.
Then, we have
\[\frac{1}{\sum_{\chi \in \Fcal_{q}}|L(1/2, \chi)|^2}\sum_{\chi \in \Fcal_q}|L(1/2, \chi)|^2 D(\chi, \phi)=\int_{-\infty}^{\infty}\phi(x)W_{\rm U}^{1}(x)dx
+\Ocal\left(\frac{1}{\log q}\right), \quad q \rightarrow \infty\]
with
\[W_{\rm U}^1(x)=1-\frac{\sin^2(\pi x)}{(\pi x)^2},\]
where $q$ tends to infinity in the set of prime numbers.
Thus when the central $L$-values are weight factors, the density function $W_{\U}$ of the one-level density changes to $W_{\U}^1$, which coincides with the pair correlation function of random Hermitian matrices in the Gaussian Unitary Ensemble.
\end{thm}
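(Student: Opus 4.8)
The plan is to combine the Weil explicit formula with an asymptotic evaluation of the twisted second moment
\[S(n):=\sum_{\chi\in\Fcal_q}|L(1/2,\chi)|^2\chi(n),\qquad M:=S(1)=\sum_{\chi\in\Fcal_q}|L(1/2,\chi)|^2.\]
Since $q$ is prime, every $\chi\in\Fcal_q$ is primitive of conductor exactly $q$, and the explicit formula — applied unconditionally through the Paley--Wiener extension of $\phi$, so that the generalized Riemann hypothesis is not needed — gives
\[D(\chi,\phi)=\widehat\phi(0)-\frac{1}{\log q}\sum_{n\ge 2}\frac{\Lambda(n)}{\sqrt n}\Bigl(\chi(n)\widehat\phi\!\bigl(\tfrac{\log n}{\log q}\bigr)+\overline{\chi(n)}\,\widehat\phi\!\bigl(-\tfrac{\log n}{\log q}\bigr)\Bigr)+O\!\Bigl(\tfrac{1}{\log q}\Bigr),\]
where the archimedean term has been reduced to $\widehat\phi(0)+O(1/\log q)$ by Stirling's formula and, crucially, the $n$-sum runs only over prime powers $n\le q^{1/3}$ because $\supp\widehat\phi\subseteq[-1/3,1/3]$. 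Multiplying by $|L(1/2,\chi)|^2$, summing over $\chi\in\Fcal_q$, dividing by $M$, and using $\overline{S(n)}=S(n)$ (the involution $\chi\mapsto\overline\chi$ preserves $\Fcal_q$ and $|L(1/2,\chi)|^2$), I am reduced to proving
\[\widehat\phi(0)-\frac{2}{M\log q}\sum_{2\le n\le q^{1/3}}\frac{\Lambda(n)}{\sqrt n}\,S(n)\,\widehat\phi\!\bigl(\tfrac{\log n}{\log q}\bigr)=\int_{-\infty}^{\infty}\phi(x)W_{\rm U}^{1}(x)\,dx+O\!\Bigl(\tfrac{1}{\log q}\Bigr),\]
where $\widehat\phi$ now stands for its even part, the only part surviving the averaging.

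To evaluate $S(n)$, write $|L(1/2,\chi)|^2=L(1/2,\chi)L(1/2,\overline\chi)$. The completed product is self-dual: its root number is $\tau(\chi)\tau(\overline\chi)/(i^{2\fa}q)=\chi(-1)q/(\chi(-1)q)=1$, using $\tau(\chi)\tau(\overline\chi)=\chi(-1)q$ and $i^{2\fa}=(-1)^\fa=\chi(-1)$. Hence there is an approximate functional equation
\[|L(1/2,\chi)|^2=2\sum_{m\ge 1}\frac{d_\chi(m)}{\sqrt m}\,W_{\fa(\chi)}\!\Bigl(\frac{\pi m}{q}\Bigr),\qquad d_\chi(m)=\sum_{ab=m}\chi(a)\overline{\chi(b)},\]
with smooth cutoffs $W_0,W_1$ indexed by the parity $\fa(\chi)\in\{0,1\}$ of $\chi$, satisfying $W_\fa(0)=1$ and rapid decay. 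Splitting $\Fcal_q=\Fcal_q^{+}\sqcup\Fcal_q^{-}$ by parity and applying orthogonality of characters of a fixed parity, the diagonal $na=b$ produces the main term; a contour shift evaluating the resulting $a$-sum via the Mellin transform of $W_\fa$, whose product with $\zeta(1+2s)$ has a double pole at $s=0$, yields
\[S(n)=\frac{q-1}{\sqrt n}\bigl(\log q-\log n+C\bigr)+(\text{off-diagonal})+O(q^{1/2+\e})\]
for an absolute constant $C$, the $O(q^{1/2+\e})$ coming from excluding the principal character from the even sum. In particular $M=(q-1)(\log q+C)+O(q^{1/2+\e})$, consistent with Paley's asymptotic, and $S(n)/M=\tfrac{1}{\sqrt n}\bigl(1-\tfrac{\log n}{\log q}\bigr)+O(1/\log q)$, in which $C$ cancels.

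The hard part will be the off-diagonal terms, which arise from $na\equiv\pm b\pmod q$ with $na\ne b$ and (by the decay of $W_\fa$) $ab\ll q^{1+\e}$. The sign $+$ forces $na+b\ge q$, confining the relevant pairs to a range the rapid decay of $W_\fa$ annihilates; the remaining case $na\equiv b\pmod q$, $na\ne b$, pins the small divisor to an arithmetic progression modulo $n$, and combining this with the decay of $W_\fa$ in that variable I expect the bound $(\text{off-diagonal})\ll nq^{1/2+\e}$. Feeding this into the reduced identity, the off-diagonal contributes
\[\ll\frac{q^{1/2+\e}}{M\log q}\sum_{n\le q^{1/3}}\frac{\Lambda(n)}{\sqrt n}\cdot n\ll\frac{q^{1/2+\e}}{q\log^{2}q}\sum_{n\le q^{1/3}}\Lambda(n)\sqrt n\ll\frac{q^{\e}}{\log^{2}q},\]
which is acceptable precisely because $\supp\widehat\phi\subseteq[-1/3,1/3]$; a cruder treatment of the long divisor, giving only $(\text{off-diagonal})\ll n^{3/2}q^{1/2+\e}$, would force the narrower support $[-1/4,1/4]$, so the argument turns on squeezing this estimate.

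Finally, substituting $S(n)/M=\tfrac{1}{\sqrt n}(1-\tfrac{\log n}{\log q})+O(1/\log q)$ and evaluating
\[\sum_{2\le n\le q^{1/3}}\frac{\Lambda(n)}{n}\Bigl(1-\frac{\log n}{\log q}\Bigr)\widehat\phi\!\Bigl(\frac{\log n}{\log q}\Bigr)=\log q\int_{0}^{\infty}(1-w)\widehat\phi(w)\,dw+O(1)\]
by partial summation from the Prime Number Theorem (again no GRH is needed), the left side of the reduced identity becomes $\widehat\phi(0)-2\int_{0}^{\infty}(1-w)\widehat\phi(w)\,dw+O(1/\log q)$. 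Since $\widehat\phi$ is supported in $[-1/3,1/3]$ and the triangle function $(1-|w|)_+$ is the Fourier transform of $(\sin\pi x/\pi x)^2$ (the Fej\'er kernel), Parseval's formula gives
\[2\int_{0}^{\infty}(1-w)\widehat\phi(w)\,dw=\int_{-\infty}^{\infty}(1-|w|)_+\,\widehat\phi(w)\,dw=\int_{-\infty}^{\infty}\phi(x)\Bigl(\frac{\sin\pi x}{\pi x}\Bigr)^{2}dx,\]
so the left side equals $\int_{-\infty}^{\infty}\phi(x)\bigl(1-\tfrac{\sin^{2}(\pi x)}{(\pi x)^{2}}\bigr)dx=\int_{-\infty}^{\infty}\phi(x)W_{\rm U}^{1}(x)\,dx$, which completes the proof.
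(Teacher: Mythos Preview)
Your overall architecture (Weil explicit formula $+$ asymptotic for the twisted second moment $S(n)=\sum_{\chi}|L(1/2,\chi)|^2\chi(n)$ $+$ Fourier/Fej\'er identification of $W_{\rm U}^1$) is exactly what the paper does. The difference is in how the twisted second moment is obtained: the paper simply quotes Selberg's formula (their Theorem~\ref{Selberg original}), specializes to $s'=1/2$, and takes the limit $s\to 1/2$ to get
\[
S(n)=\frac{q-1}{\sqrt n}\bigl(\log q-\log n+\gamma-\log 8\pi\bigr)+\Ocal(nq^{1/2}),
\]
whereas you propose to re-derive this via the approximate functional equation for $|L(1/2,\chi)|^2$ and a direct diagonal/off-diagonal analysis. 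Citing Selberg buys the paper a much shorter argument; your route is self-contained but must reprove a nontrivial 1946 theorem.

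That said, your off-diagonal sketch is the genuine weak point. The bound $(\text{off-diagonal})\ll nq^{1/2+\e}$ is precisely Selberg's error term, and your argument for it is not right as written: the congruence $na\equiv b\pmod q$ constrains $b$ modulo $q$, not modulo $n$, so ``pins the small divisor to an arithmetic progression modulo $n$'' is garbled. The standard treatment uses $|na-b|\ge q$ to force one of the divisors to be large, then plays this off against $ab\ll q^{1+\e}$; making this quantitative enough to recover $nq^{1/2+\e}$ (rather than the cruder $n^{3/2}q^{1/2+\e}$ you mention) is exactly the content of Selberg's paper and is not a one-liner. If you intend this route, that step needs a real argument; otherwise, cite Selberg as the paper does.

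One further slip: your stated error $S(n)/M=\tfrac{1}{\sqrt n}(1-\tfrac{\log n}{\log q})+O(1/\log q)$, if the $O(1/\log q)$ is meant uniformly in $n$, is not fine enough. Feeding it back into
\[
\frac{2}{\log q}\sum_{n\le q^{1/3}}\frac{\Lambda(n)}{\sqrt n}\cdot O\!\Bigl(\tfrac{1}{\log q}\Bigr)
\ \ll\ \frac{1}{(\log q)^2}\sum_{n\le q^{1/3}}\frac{\Lambda(n)}{\sqrt n}\ \asymp\ \frac{q^{1/6}}{(\log q)^2},
\]
which diverges. The correct error (and what the paper records in Proposition~\ref{Selberg by L(1/2)}) carries a factor $n^{-1/2}$, i.e.\ $O\bigl(n^{-1/2}/\log q\bigr)$; with that factor the contribution collapses to $\tfrac{1}{(\log q)^2}\sum_{n\le q^{1/3}}\Lambda(n)/n=O(1/\log q)$, as needed. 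After these two fixes your proof matches the paper's.
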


\begin{rem}
Theorem \ref{main by L(1/2)} gives another concrete example of the weighted one-level density conjecture by Fazzari \cite[Conjecture 1]{Fazzari}.
\end{rem}

Next we consider the one-level density weighted by the square $|L(s,\chi)|^2$ of special $L$-values at
any fixed $s \in (1/2,1)$.
By \eqref{twisted second moment} in Proposition \ref{Selberg by L(s)} below, we have
\[\sum_{\chi \in \Fcal_q}|L(s, \chi)|^2
\sim q\,\zeta(2s), \qquad q\rightarrow \infty.
\]
Hence this sum is non-zero
for any sufficiently large prime number $q$.
Our weighted one-level density with parameter $s$
is stated as follows.
\begin{thm}\label{main by L(s)}
Take any $s\in (1/2, 1)$.
Let $\phi$ be a Schwartz function on $\RR$ such that 
$\supp(\hat\phi) \subset[-2s/3, 2s/3]$.
Then we have
\begin{align*}
\frac{1}{\sum_{\chi \in \Fcal_q}|L(s, \chi)|^2}\sum_{\chi \in \Fcal_q}
|L(s, \chi)|^2D(\chi, \phi) =\int_{-\infty}^{\infty}\phi(x)W_{\rm U}(x)dx
+\Ocal_s\left(\frac{1}{\log q}\right).
\end{align*}
Thus the change of symmetry type does not occur
when $1/2< s< 1$.
\end{thm}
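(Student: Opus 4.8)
The plan is to follow the same explicit-formula strategy as for Theorem \ref{main by L(1/2)}, but to exploit the fact that at $s \in (1/2,1)$ the twisted second moment $\sum_{\chi \in \Fcal_q}|L(s,\chi)|^2$ behaves like $q\,\zeta(2s)$, i.e.\ it grows like $q$ rather than $q\log q$. First I would apply the explicit formula (Riemann--von Mangoldt, with the Paley--Wiener extension of $\phi$ so that no Riemann hypothesis is needed) to write
\begin{align*}
D(\chi,\phi) = \hat\phi(0)\,\frac{\log(q/\pi)}{\log q} + (\text{gamma-factor terms}) - \frac{2}{\log q}\sum_{n\ge 2}\frac{\Lambda(n)}{\sqrt n}\,\hat\phi\!\left(\frac{\log n}{\log q}\right)\bigl(\chi(n)+\bar\chi(n)\bigr) + \text{(trivial zeros)}.
\end{align*}
The first two pieces, after averaging against the weights $|L(s,\chi)|^2$, reproduce $\int \phi(x)W_{\rm U}(x)\,dx = \hat\phi(0) = \phi(0)$ plus an error of size $O(1/\log q)$; here one only needs that $\sum_\chi |L(s,\chi)|^2 \sim q\,\zeta(2s)$ is nonzero, which is \eqref{twisted second moment}. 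So the entire content is to show that the weighted average of the prime sum is $O_s(1/\log q)$.

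Next I would open up $|L(s,\chi)|^2 = \sum_{m_1,m_2}\chi(m_1)\bar\chi(m_2) m_1^{-s} m_2^{-s}$ (using a suitable approximate functional equation or smooth partial summation to truncate $m_1,m_2$ at length roughly $q$), multiply by $\chi(n)+\bar\chi(n)$ from the prime sum, and sum over $\chi \in \Fcal_q$. The character orthogonality relation $\sum_{\chi \bmod q}\chi(a)\bar\chi(b) = \varphi(q)\mathbf{1}_{a\equiv b}$ (with the principal-character correction, and $q$ prime so $\varphi(q)=q-1$) collapses the triple sum: the diagonal contribution requires $n m_1 \equiv m_2$ or $m_1 \equiv n m_2 \pmod q$. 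Because $\supp(\hat\phi) \subset [-2s/3,2s/3]$, the prime $n$ ranges only over $n \le q^{2s/3}$, while $m_1, m_2 \le q^{1+\epsilon}$; the truly diagonal term $m_2 = n m_1$ (resp.\ $m_1 = n m_2$) contributes a sum of size
\[
\frac{q}{\sum_\chi|L(s,\chi)|^2}\cdot\frac{1}{\log q}\sum_{n\le q^{2s/3}}\frac{\Lambda(n)}{\sqrt n}\,\hat\phi\!\Bigl(\tfrac{\log n}{\log q}\Bigr)\sum_{m}\frac{1}{(m)^{s}(nm)^{s}},
\]
and the inner sum over $m$ converges (since $2s>1$), giving a factor $\zeta(2s)n^{-s}$; combined with $\Lambda(n)n^{-1/2-s}$ this is absolutely summable, and after dividing by $\sum_\chi|L(s,\chi)|^2 \asymp q$ one gets $O_s(1/\log q)$. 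The off-diagonal terms, where $nm_1 \equiv \pm m_2 \pmod q$ but $nm_1 \ne m_2$, force $|nm_1 - m_2| \ge q$, hence (given the length restrictions) occur only for a sparse set of $m_1,m_2$ and contribute an error that is a negative power of $q$; this is where the support bound $2s/3$ is used, exactly as the bound $1/3$ is used in Theorem \ref{main by L(1/2)}.

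The main obstacle is the careful bookkeeping of the off-diagonal and the principal-character terms: one must show that after the orthogonality step the "extra" congruences $nm_1 \equiv m_2 \pmod q$ with $nm_1 \neq m_2$ genuinely lose a power of $q$, and that the contribution of $\chi = \chi_0$ being excluded (the $-1$ in $\sum_{\chi\in\Fcal_q} = \sum_{\chi \bmod q} - (\chi=\chi_0)$) only affects lower-order terms. I expect these to go through with the same methods as in the proof of Theorem \ref{main by L(1/2)}, the key simplification being that here $2s > 1$ makes the $m$-sums converge on the nose, so no logarithmic loss appears and the final density function is simply $W_{\rm U}(x)=1$ with no correction term; consequently the symmetry type is unchanged, as asserted.
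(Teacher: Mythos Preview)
Your overall strategy---explicit formula followed by control of the twisted second moment---is the right one, and your observation that convergence of $\sum_m m^{-2s}$ for $s>1/2$ kills any logarithmic correction is exactly the mechanism behind the unchanged symmetry type. However, your route to the twisted moment differs from the paper's: you propose to open $|L(s,\chi)|^2$ by an approximate functional equation, execute orthogonality on the resulting triple sum, and separate diagonal from off-diagonal by hand. The paper instead invokes Selberg's asymptotic for the twisted second moment (Theorem~\ref{Selberg original}) as a black box, specializes it to $s'=s$, $n=1$ to obtain the clean pointwise estimate
\[
\frac{1}{\sum_{\chi}|L(s,\chi)|^2}\sum_{\chi}|L(s,\chi)|^2\chi(m)=m^{-s}+\Ocal_s(m^{s-1}q^{1-2s})+\Ocal_s(mq^{-s})
\]
(Proposition~\ref{Selberg by L(s)}), and then inserts this directly into the prime sums $M_q^{(k)}(s)$. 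The support condition $\alpha\le 2s/3$ then falls out transparently from the $\Ocal_s(mq^{-s})$ term via $\sum_{p\le q^{\alpha/k}}p^{k/2}\log p\ll q^{3\alpha/2}$. Your approach is in effect re-deriving Selberg's formula inside the proof; it is more self-contained but costs you the bookkeeping of the approximate functional equation, the dual sum from the functional-equation piece (which is what produces the $m^{s-1}q^{1-2s}$ term you do not mention), and the off-diagonal congruence count, all of which the paper outsources in one citation.

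Two small slips: you write $\int\phi(x)W_{\rm U}(x)\,dx=\hat\phi(0)=\phi(0)$, but the last equality is false in general---only $\hat\phi(0)$ is correct here. Also, your explicit formula has a stray factor of $2$ (or equivalently a missing $1/2$) in front of the $(\chi(n)+\bar\chi(n))$ sum; compare the form quoted in the paper just before Proposition~\ref{asymptotics L(s)}.
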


\begin{rem}
By Theorems \ref{main by L(1/2)} and \ref{main by L(s)},
the weighted one-level density by special values $|L(s,\chi)|^2$ at $s\in[1/2, 1)$
causes the change of symmetry type
if and only if $s=1/2$.
This supports the weighted density conjecture by the first author \cite[Conjecture 1.3]{Sugiyama}.
\end{rem}

The key idea for proving
the asymptotic behavior of the weighted one-level density is the use of Selberg's formula of the twisted second moment of Dirichlet $L$-functions with complex parameters $s$ and $s'$ \cite{Selberg}.
Selberg's formula is a substitute
of the explicit Jacquet-Zagier type trace formula
by the first author and Tsuzuki \cite{SugiyamaTsuzuki},
as we see that such a parametrized trace formula was used in \cite{Sugiyama}
for analysis of the weighted one-level density for symmetric power $L$-functions attached to Hilbert modular forms.
The computation in Proposition \ref{Selberg by L(1/2)}
is essentially the same as that in \cite[Theorem 2.6]{Sugiyama}.
Both computations include the derivation at $s=1/2$
for deducing the main term and the second main term.
Asymptotics using Selberg's formula is explained in \S \ref{Twisted second moment of Dirichlet L-functions}.
We give proofs of Theorems \ref{main by L(1/2)} and \ref{main by L(s)}
in \S \ref{Weighted one-level density}.

\begin{rem}
We can extend Theorems \ref{main by L(1/2)} and \ref{main by L(s)} to the case of general $q$, that is, $q$ does not necessarily need to be a prime number. In that case, in place of Theorem \ref{Selberg original}, we need to use the more general form of Selberg's formula as stated in \cite[pp. 6--7]{Selberg}.
\end{rem}

\section{Twisted second moment of Dirichlet $L$-functions}
\label{Twisted second moment of Dirichlet L-functions}

In this section, we use the asymptotic formula for the twisted second moment of Dirichlet $L$-functions due to Selberg \cite[Theorem 1]{Selberg} to deduce the asymptotic formula suitable for our purpose.
Recall that we consider only the case when the modulus $q$ of Dirichlet characters is a prime number.

\begin{thm}[Selberg, 1946] \label{Selberg original}
Let $q$ be a prime number.
Let $m$ and $n$ be positive integers such that
$m$ and $n$ are coprime to each other and to $q$.
For any $(s ,s')\in \CC^2$ such that $\s=\Re(s)\in (0, 1)$ and $\s'=\Re(s')\in(0, 1)$, we have
\begin{align*}
& \sum_{\chi\in {\Fcal_q}}L(s, \chi)L(s', \overline{\chi})
\chi(m)\overline{\chi(n)} \\
=&\, \frac{q-1}{m^{s'}n^{s}}\zeta(s+s') \\
&
+\frac{(q-1)\, q^{1-s-s'}}{m^{1-s}n^{1-s'}}\frac{(2\pi)^{s+s'-1}}{\pi}\Gamma(1-s)\Gamma(1-s')\cos\left(\frac{\pi}{2}(s-s')\right)\zeta(2-s-s')\\
&
+\Ocal\left(\frac{|ss'|}{\s\s'(1-\s)(1-\s')}(mq^{1-\s}+nq^{1-\s'}+mnq^{1-\s-\s'})\right), \qquad q\rightarrow \infty,
\end{align*}
where the implied constant is independent of $m$, $n$, $q$, $s$ and $s'$.
On the right-hand side, the value at $(s, s')$ with $s+s'=1$ is
understood as the limit when $s+s' \rightarrow 1$.
\end{thm}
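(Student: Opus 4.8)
The plan is to prove the formula by character orthogonality followed by the functional equation of the Hurwitz (equivalently, periodic/Lerch) zeta function; throughout one argues first for $\Re(s),\Re(s')>1$, where all Dirichlet series converge absolutely, and only at the end descends to $0<\Re(s),\Re(s')<1$ by analytic continuation — the left-hand side is entire in $(s,s')$ since each non-principal $L(\cdot,\chi)$ is entire, while the right-hand side is patently meromorphic, so this is unambiguous. Write $e(x):=e^{2\pi ix}$. Expanding $L(s,\chi)=\sum_a\chi(a)a^{-s}$ and $L(s',\overline\chi)=\sum_b\overline{\chi(b)}b^{-s'}$ and summing over all $\chi\bmod q$, orthogonality gives
\[
\sum_{\chi\bmod q}L(s,\chi)L(s',\overline\chi)\chi(m)\overline{\chi(n)}
=(q-1)\sum_{\substack{\gcd(ab,q)=1\\ am\equiv bn\ (q)}}a^{-s}b^{-s'},
\]
the terms with $q\mid ab$ dropping out because then $\chi(a)$ or $\chi(b)$ vanishes for every $\chi$. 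Removing the principal character contributes $-(1-q^{-s})(1-q^{-s'})\zeta(s)\zeta(s')$, which by the convexity bound for $\zeta$ in the critical strip is $\ll|ss'|/\big(\sigma\sigma'(1-\sigma)(1-\sigma')\big)$ and hence absorbed into the stated error. It remains to treat $S:=\sum_{\gcd(ab,q)=1,\ am\equiv bn\,(q)}a^{-s}b^{-s'}$.

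Split $S$ according to whether $am=bn$ holds as an equation of integers. Since $\gcd(m,n)=1$, the diagonal $am=bn$ forces $a=nt,\ b=mt$ with $\gcd(t,q)=1$, so it equals $n^{-s}m^{-s'}\zeta(s+s')(1-q^{-s-s'})$; multiplied by $q-1$, the $\zeta(s+s')$-piece is precisely the first main term and the $q^{-s-s'}$-piece goes into the error (and near $\Re(s+s')=1$ into the pole cancellation noted below). For the off-diagonal part I would write the inner sum over $a$, which runs through the residue class determined by $am\equiv bn\,(q)$, as a Hurwitz zeta value $q^{-s}\zeta\!\big(s,c(b)/q\big)$; expand it by finite Fourier analysis as $\zeta(s,c/q)=q^{s-1}\sum_{h=1}^{q}\mathrm{Li}_s(e(-h/q))\,e(ch/q)$, do the analogous step in $b$ (recognising the resulting $b$-sum as a periodic zeta function), and then apply the Hurwitz--Lerch functional equation
\[
\mathrm{Li}_s(e(x))=(2\pi)^{s-1}\Gamma(1-s)\Big(e^{i\pi(1-s)/2}\zeta(1-s,x)+e^{-i\pi(1-s)/2}\zeta(1-s,1-x)\Big),\qquad 0<x<1,
\]
to both periodic zeta functions that appear. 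This produces the prefactor $(2\pi)^{s+s'-2}\Gamma(1-s)\Gamma(1-s')\,q^{1-s-s'}$ times a sum over $h$ of four products of Hurwitz zeta values at $1-s$ and $1-s'$; carrying out the $h$-summation, two of the four cross terms collapse onto the congruence $an\equiv bm\,(q)$ with combined coefficient $2\cos\!\big(\tfrac{\pi}{2}(s-s')\big)$ and the remaining two onto $an\equiv -bm\,(q)$ with coefficient $-2\cos\!\big(\tfrac{\pi}{2}(s+s')\big)$. The first congruence sum, $\sum_{\gcd(ab,q)=1,\ an\equiv bm\,(q)}a^{s-1}b^{s'-1}$, has diagonal $an=bm$, which (using $\gcd(m,n)=1$ once more) equals $m^{s-1}n^{s'-1}\zeta(2-s-s')(1-q^{s+s'-2})$; since $2(2\pi)^{s+s'-2}=(2\pi)^{s+s'-1}/\pi$, its leading contribution reassembles, after multiplication by $q-1$, into exactly the stated second main term
\[
\frac{(q-1)q^{1-s-s'}}{m^{1-s}n^{1-s'}}\cdot\frac{(2\pi)^{s+s'-1}}{\pi}\,\Gamma(1-s)\Gamma(1-s')\cos\!\big(\tfrac{\pi}{2}(s-s')\big)\zeta(2-s-s').
\]
The congruence $an\equiv -bm\,(q)$ has no diagonal (all of $a,b,m,n$ are $\ge1$) and so contributes only to the error, as do the off-diagonal part of the first congruence sum and the various lower-order pieces (the $(1-q^{\bullet})$ factors and the $h=q$ terms). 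It then remains to continue the identity analytically to $0<\Re(s),\Re(s')<1$: on the line $\Re(s+s')=1$ the factor $\zeta(s+s')$ in the first main term and the factor $\zeta(2-s-s')$ in the second each have a simple pole, and a residue computation shows these cancel, which is the meaning of the limit prescribed in the statement. Note that, once the orthogonality-plus-Hurwitz strategy is adopted, the whole shape of both main terms — the Gamma factors, the factor $(2\pi)^{s+s'-1}/\pi$, the cosine $\cos\!\big(\tfrac{\pi}{2}(s-s')\big)$, and the factor $q^{1-s-s'}m^{s-1}n^{s'-1}$ — is essentially forced by this computation.

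The main obstacle is the error term. One must estimate the two off-diagonal congruence sums together with the conditionally convergent tails of the Hurwitz/Fourier expansions and extract the precise dependence
\[
\frac{|ss'|}{\sigma\sigma'(1-\sigma)(1-\sigma')}\big(mq^{1-\sigma}+nq^{1-\sigma'}+mnq^{1-\sigma-\sigma'}\big)
\]
uniformly in all five parameters; in particular one has to match the growth of the $\Gamma$- and $\zeta$-factors as $s$ or $s'$ approaches an edge of the critical strip, which is exactly what forces the factor $|ss'|/\big(\sigma\sigma'(1-\sigma)(1-\sigma')\big)$. A genuine secondary difficulty is that no single half-plane makes all of the series above converge absolutely at once (the periodic zeta converges for $\Re>0$, its Hurwitz dual only for $\Re<0$), so the Fourier and functional-equation steps must be justified as identities of meromorphic functions — for instance via Mellin--Barnes representations, or smooth cutoffs with controlled truncation errors — rather than carried out naively inside the strip; doing this carefully is precisely what delivers the error term in the stated form.
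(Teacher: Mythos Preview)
The paper does not contain a proof of this statement: Theorem~\ref{Selberg original} is simply quoted from Selberg's 1946 paper \cite[Theorem 1]{Selberg} and used as a black box in the subsequent propositions. So there is no ``paper's own proof'' to compare against.

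That said, your outline is the classical route to results of this type and is in the spirit of Selberg's original argument: open the two $L$-series, apply orthogonality to collapse the character sum to a congruence $am\equiv bn\pmod q$, separate the true diagonal $am=bn$ (which, with $\gcd(m,n)=1$, parametrises as $a=nt$, $b=mt$ and yields the $\zeta(s+s')$ term), and then transform the genuine off-diagonal via the Hurwitz/periodic zeta functional equation to produce the dual main term with $\Gamma(1-s)\Gamma(1-s')\cos\!\big(\tfrac{\pi}{2}(s-s')\big)\zeta(2-s-s')$ and the power $q^{1-s-s'}m^{s-1}n^{s'-1}$. Your identification of the constants and of the pole cancellation at $s+s'=1$ is correct.

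Where your write-up remains a sketch is exactly where you say it does: the uniform error bound. You have not actually bounded the off-diagonal congruence sums $\sum_{an\equiv\pm bm\,(q),\,an\neq bm} a^{s-1}b^{s'-1}$, nor controlled the conditionally convergent Hurwitz/Lerch expansions inside the strip, and the precise shape $|ss'|\big/\!\big(\sigma\sigma'(1-\sigma)(1-\sigma')\big)\cdot(mq^{1-\sigma}+nq^{1-\sigma'}+mnq^{1-\sigma-\sigma'})$ does not yet emerge from what you wrote. In Selberg's treatment this is handled by working with a finite approximate functional equation (truncated Dirichlet polynomials of length $\asymp q$) rather than infinite Hurwitz expansions, which makes the off-diagonal estimation elementary and delivers the three error pieces $mq^{1-\sigma}$, $nq^{1-\sigma'}$, $mnq^{1-\sigma-\sigma'}$ directly from the three ranges $a\le q$, $b\le q$, $a,b\le q$. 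If you want a self-contained proof along your lines, replacing the Hurwitz machinery by an approximate functional equation for $L(s,\chi)$ would make the error analysis much cleaner.
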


\begin{prop}\label{Selberg by L(s)}
Let $q$ be a prime number and $m$ a positive integer coprime to $q$.
For any fixed $s \in (1/2,1)$, we have
\begin{equation} \label{prop moment L(s)}
\frac{1}{\sum_{\chi \in \Fcal_q}|L(s, \chi)|^2}\sum_{\chi\in {\Fcal_q}}|L(s, \chi)|^2\chi(m)
= m^{-s}+\Ocal_s(m^{s-1}q^{1-2s})+\Ocal_{s}(m q^{-s}), \quad q \rightarrow \infty,
\end{equation}
where the implied constant is independent of $m$ and $q$.
\end{prop}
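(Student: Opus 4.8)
The plan is to deduce \eqref{prop moment L(s)} from Selberg's formula (Theorem~\ref{Selberg original}) by specializing its two complex parameters and then dividing by the second moment. Since $s\in(1/2,1)$ is real we have $L(s,\overline\chi)=\overline{L(s,\chi)}$, hence $|L(s,\chi)|^2=L(s,\chi)L(s,\overline\chi)$; so I would apply Theorem~\ref{Selberg original} with $s'=s$ and $n=1$, both $m$ and $1$ being coprime to the prime $q$. With these choices $s+s'=2s\neq 1$ (so no limiting value is needed), $\cos(\tfrac{\pi}{2}(s-s'))=1$, $\Gamma(1-s)\Gamma(1-s')=\Gamma(1-s)^2$, $\zeta(s+s')=\zeta(2s)$, $\zeta(2-s-s')=\zeta(2-2s)$, and $\tfrac{|ss'|}{\sigma\sigma'(1-\sigma)(1-\sigma')}=(1-s)^{-2}$, a finite constant depending only on $s$. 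This gives
\[
\sum_{\chi\in\Fcal_q}|L(s,\chi)|^2\chi(m)=(q-1)\zeta(2s)\,m^{-s}+C_s\,(q-1)\,q^{1-2s}\,m^{s-1}+\Ocal_s\!\left(mq^{1-s}+q^{1-s}+mq^{1-2s}\right),
\]
where $C_s=\pi^{-1}(2\pi)^{2s-1}\Gamma(1-s)^2\zeta(2-2s)=\Ocal_s(1)$.

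Next I would take $m=1$. Since $1/2<s<1$ forces $1-2s<0$ and $0<1-s<1$, both the secondary term and the error are $o(q)$, so
\[
\sum_{\chi\in\Fcal_q}|L(s,\chi)|^2=(q-1)\zeta(2s)\bigl(1+\Ocal_s(q^{1-2s})\bigr),
\]
which is positive for all sufficiently large primes $q$ because $\zeta(2s)>0$ for $2s>1$. This recovers the asymptotic $\sum_{\chi}|L(s,\chi)|^2\sim q\,\zeta(2s)$ recorded after the statement and, in particular, guarantees that the quotient in \eqref{prop moment L(s)} is well defined for $q$ large.

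Finally I would form the ratio. Dividing the first display by $(q-1)\zeta(2s)$ yields $m^{-s}+\tfrac{C_s}{\zeta(2s)}q^{1-2s}m^{s-1}+\Ocal_s(mq^{-s}+q^{-s}+mq^{-2s})$; since $q^{-s}\le mq^{-s}$ and $q^{-2s}\le q^{-s}$ this equals $m^{-s}+\Ocal_s(q^{1-2s}m^{s-1})+\Ocal_s(mq^{-s})$. Dividing this by $1+\Ocal_s(q^{1-2s})$ and using $(1+x)^{-1}=1+\Ocal(x)$ as $x\to 0$ produces $m^{-s}$ together with the two error terms above and three cross terms of orders $q^{1-2s}m^{-s}$, $q^{2-4s}m^{s-1}$ and $q^{1-3s}m$. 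The inequality $m^{-s}\le m^{s-1}$, valid for $m\ge 1$ exactly because $s\ge 1/2$, absorbs the first cross term into $\Ocal_s(q^{1-2s}m^{s-1})$; the inequalities $2-4s\le 1-2s$ and $1-3s\le -s$ (both equivalent to $2s\ge 1$) absorb the other two into $\Ocal_s(q^{1-2s}m^{s-1})$ and $\Ocal_s(mq^{-s})$ respectively. This is precisely \eqref{prop moment L(s)}.

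The only genuine work is this last bookkeeping step: one must track how each of the several error contributions of Selberg's formula transforms under normalization by the second moment and verify that all of them fit under the two stated shapes $m^{s-1}q^{1-2s}$ and $mq^{-s}$. The single fact that makes the estimate close is $s>1/2$, which simultaneously sends $q^{1-2s}\to 0$, keeps $\zeta(2s)>0$ so that the denominator does not vanish, and supplies the exponent inequalities used to merge the cross terms; I anticipate no analytic difficulty beyond this.
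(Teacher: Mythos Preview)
Your proof is correct and follows essentially the same approach as the paper: specialize Selberg's formula to $s'=s$, $n=1$, then normalize by the second moment. The only cosmetic difference is that the paper organizes the division via the algebraic identity $\frac{F_m+E_m}{F_1+E_1}=\frac{F_m}{F_1}+\frac{E_m-(F_m/F_1)E_1}{F_1+E_1}$, whereas you factor the denominator as $(q-1)\zeta(2s)(1+\Ocal_s(q^{1-2s}))$ and divide in two steps; both routes require the same exponent inequalities driven by $s>1/2$.
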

\begin{proof}
Restricting Selberg's formula in Theorem \ref{Selberg original} to the case $s=s' \in (1/2,1)$ and $n=1$,
we obtain
\begin{align}\label{twisted second moment}
& \sum_{\chi\in {\Fcal_q}}|L(s, \chi)|^2\chi(m) \\
=&\, \frac{q-1}{m^s}\zeta(2s)
+ \frac{(q-1) q^{1-2s}}{m^{1-s}}\frac{(2\pi)^{2s-1}}{\pi}
\Gamma(1-s)^2\zeta(2-2s)
+ \Ocal_{s}(m q^{1-s}).\notag
\end{align}
Let $F_m$ denote
the main term of the right-hand side above and $E_m$
its error term.
Then the left-hand side of \eqref{prop moment L(s)} is equal to
\begin{equation} \label{FmEm}
\frac{F_m+E_m}{F_1+E_1}=\frac{F_m}{F_1}+\frac{E_m-\frac{F_m}{F_1}E_1}{F_1+E_1}
\end{equation}
(see \cite[Proposition 3.1]{KnightlyReno} and \cite[Corollary 2.9]{Sugiyama}).
The first term on the right-hand side
is then evaluated as
\[\frac{F_m}{F_1} = \frac{\frac{1}{m^s}
+\frac{q^{1-2s}}{m^{1-s}}\frac{(2\pi)^{2s-1}}{\pi}\Gamma(1-s)^2\frac{\zeta(2-2s)}{\zeta(2s)}}{
1+q^{1-2s}\frac{(2\pi)^{2s-1}}{\pi}\Gamma(1-s)^2\frac{\zeta(2-2s)}{\zeta(2s)}}
= m^{-s}
+\Ocal_s\left(\frac{1}{m^{1-s}q^{2s-1}}\right).\]
Finally, we estimate the second term of the right-hand side of \eqref{FmEm}
as
\begin{align*}
\frac{E_m-\frac{F_m}{F_1}E_1}{F_1+E_1}
\ll_{s} \frac{mq^{1-s}+(m^{-s}+m^{s-1}q^{1-2s})q^{1-s}}
{q}
\ll mq^{-s}.
\end{align*}
This completes the proof.
\end{proof}

\begin{prop}\label{Selberg by L(1/2)}
Let $q$ be a prime number and $m$ a positive integer coprime to $q$.
Then we have
\begin{align*}
& \frac{1}{\sum_{\chi \in \Fcal_q}|L(1/2, \chi)|^2}\sum_{\chi\in {\Fcal_q}}|L(1/2, \chi)|^2 \chi(m) \\
= &\, m^{-1/2}-m^{-1/2}\frac{\log m}{\log q}
+\Ocal\left(\frac{m^{-1/2}}{\log q}\left(1+\frac{\log{m}}{\log{q}}\right)\right)+\Ocal\left(\frac{mq^{-1/2}}{\log{q}}\right), \qquad 
q \rightarrow \infty,
\end{align*}
where the implied constant is independent of $m$ and $q$.
\end{prop}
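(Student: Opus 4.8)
The plan is to rerun the argument of Proposition~\ref{Selberg by L(s)} at the degenerate point $s=s'=1/2$, where the two main terms of Selberg's formula each acquire a simple pole, coming from the pole of $\zeta(s+s')$ at $s+s'=1$ and from that of $\zeta(2-s-s')$. These poles cancel, and isolating the finite value of their sum is exactly ``the derivation at $s=1/2$'' that produces the second main term $-m^{-1/2}\log m/\log q$. First I would specialise Theorem~\ref{Selberg original} to $s'=s$ and $n=1$, where $\cos(\tfrac\pi2(s-s'))=1$ and $\Gamma(1-s)\Gamma(1-s')=\Gamma(1-s)^2$; viewing $s$ as a complex variable on the strip $0<\Re(s)<1$, this gives $\Phi(s):=\sum_{\chi\in\Fcal_q}L(s,\chi)L(s,\overline\chi)\chi(m)=G(s)+R(s)$, where
\[
G(s):=\frac{q-1}{m^{s}}\zeta(2s)+\frac{(q-1)\,q^{1-2s}}{m^{1-s}}\cdot\frac{(2\pi)^{2s-1}}{\pi}\,\Gamma(1-s)^2\,\zeta(2-2s)
\]
and $R(s)$ is the error term of Theorem~\ref{Selberg original} with $(m,n)\mapsto(m,1)$. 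Note that $\Phi$ is holomorphic on the strip and, for real $s$, equals $\sum_{\chi}|L(s,\chi)|^2\chi(m)$.

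Next I would carry out the limit $s\to 1/2$. Writing $\e=s-\tfrac12$ and using $\zeta(2s)=\tfrac1{2\e}+\gamma+\Ocal(\e)$, $\zeta(2-2s)=-\tfrac1{2\e}+\gamma+\Ocal(\e)$ and $\Gamma(1/2)^2=\pi$, one checks that the residues of the two terms of $G$ at $s=1/2$ are $\tfrac12\cdot\tfrac{q-1}{m^{1/2}}$ and $-\tfrac12\cdot\tfrac{q-1}{m^{1/2}}$, hence cancel, so $G$ extends holomorphically across $s=1/2$; a short Laurent-expansion computation (using $\psi(1/2)=-\gamma-2\log2$) then gives
\[
G(1/2)=\frac{q-1}{m^{1/2}}\bigl(\log q-\log m+c_0\bigr),\qquad c_0:=\gamma-\log(8\pi),
\]
only the facts that $c_0$ is an absolute constant and that $\log q+c_0>0$ for $q$ large being used below. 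For the error, $R=\Phi-G$ is holomorphic near $s=1/2$ as a difference of two such functions, and on the circle $|s-1/2|=1/\log q$ the prefactor $\tfrac{|s|^2}{(\Re s)^2(1-\Re s)^2}$ occurring in Theorem~\ref{Selberg original} is $\Ocal(1)$ while $q^{1-\Re s}\ll q^{1/2}$ and $q^{1-2\Re s}\ll q^{1/2}$; hence $R(s)=\Ocal(mq^{1/2})$ uniformly on that circle, and Cauchy's integral formula (or the maximum principle) applied there gives $R(1/2)=\Ocal(mq^{1/2})$. Thus, setting $F_m:=G(1/2)$ and $E_m:=R(1/2)$,
\[
\sum_{\chi\in\Fcal_q}|L(1/2,\chi)|^2\chi(m)=F_m+E_m,\qquad F_m=\frac{q-1}{m^{1/2}}(\log q-\log m+c_0),\quad E_m=\Ocal(mq^{1/2}).
\]

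Finally I would form the ratio by the same algebraic identity as in \eqref{FmEm}:
\[
\frac{\sum_{\chi}|L(1/2,\chi)|^2\chi(m)}{\sum_{\chi}|L(1/2,\chi)|^2}=\frac{F_m}{F_1}+\frac{E_m-\tfrac{F_m}{F_1}E_1}{F_1+E_1}.
\]
Here $F_1=(q-1)(\log q+c_0)\sim q\log q$ (in particular $F_1\ne0$ for $q$ large) and $E_1=\Ocal(q^{1/2})$; since $m^{-1/2}\le 1$ and $m^{-1/2}\log m=\Ocal(1)$ one checks $F_m/F_1=\Ocal(1)$ uniformly in $m$, so the second term is $\Ocal\!\bigl(\tfrac{mq^{1/2}+q^{1/2}}{q\log q}\bigr)=\Ocal\!\bigl(\tfrac{mq^{-1/2}}{\log q}\bigr)$. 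For the first, using $\tfrac1{\log q+c_0}=\tfrac1{\log q}\bigl(1+\Ocal(\tfrac1{\log q})\bigr)$,
\[
\frac{F_m}{F_1}=m^{-1/2}\cdot\frac{\log q-\log m+c_0}{\log q+c_0}=m^{-1/2}-m^{-1/2}\frac{\log m}{\log q}+\Ocal\!\Bigl(\frac{m^{-1/2}\log m}{(\log q)^2}\Bigr),
\]
whose error term is dominated by $\tfrac{m^{-1/2}}{\log q}\bigl(1+\tfrac{\log m}{\log q}\bigr)$; adding the two contributions gives the stated formula. (Equivalently one may leave $c_0$ uncomputed and absorb $c_0m^{-1/2}/\log q$ into the stated error term, which accounts for its shape.)

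The hard part will be the middle step: verifying that the two simple poles of $G$ at $s=1/2$ really cancel and determining the constant term of the Laurent expansion, and --- somewhat more delicately --- the short holomorphy-plus-Cauchy argument that makes it legitimate to ``evaluate'' the error term of Theorem~\ref{Selberg original} at the boundary point $s=1/2$, since that theorem only asserts the bound for $\Re(s),\Re(s')\in(0,1)$. Everything downstream of that is the routine bookkeeping already carried out in the proof of Proposition~\ref{Selberg by L(s)}.
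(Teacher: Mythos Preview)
Your proof is correct and follows essentially the same route as the paper: specialise Selberg's formula, extract the finite part of the two singular main terms at $s=1/2$ (you take the diagonal $s'=s$ and Laurent-expand, the paper fixes $s'=1/2$ and applies L'H\^opital, but the computation is the same and both yield $F_m=(q-1)m^{-1/2}(\log q-\log m+\gamma-\log 8\pi)$), then form the ratio via \eqref{FmEm}. One simplification: the holomorphy-plus-Cauchy argument you flag as ``the hard part'' is unnecessary, since Theorem~\ref{Selberg original} is stated for all $\sigma,\sigma'\in(0,1)$ with the convention that the main term at $s+s'=1$ is its limit, so the error bound $\Ocal(mq^{1/2})$ at $s=s'=1/2$ can be read off directly.
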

\begin{proof}
When $s'=1/2$ and $n=1$, the main term of Selberg's formula in Theorem \ref{Selberg original} is given by
\begin{align}
\label{main term Selberg 1/2}
\frac{q-1}{m^{1/2}}\zeta(s+1/2)
+ (q-1)q^{1/2-s}m^{s-1}\frac{(2\pi)^{s-1/2}}{\sqrt{\pi}}\Gamma(1-s)\cos\left(\frac{\pi}{2}(s-1/2)\right) \zeta(3/2-s).
\end{align}
Utilizing
\[\zeta(s+1/2)=\frac{1}{s-1/2}+\gamma+\Ocal(s-1/2), \quad s\rightarrow 1/2,\]
\[\zeta(3/2-s)=\frac{-1}{s-1/2}+\gamma+\Ocal(s-1/2),
\quad s\rightarrow 1/2,\]
where $\gamma$ is the Euler-Mascheroni constant,
and
$\cos(\frac{\pi}{2}(s-1/2))=1+\Ocal((s-1/2)^2)$ as
$s\rightarrow 1/2$,
we can rewrite the term \eqref{main term Selberg 1/2} as
\begin{align}
& \left\{(q-1)m^{-1/2} - (q-1)q^{1/2-s}m^{s-1}
\frac{(2\pi)^{s-1/2}}{\sqrt{\pi}}\Gamma(1-s)\right\}\frac{1}{s-1/2}\label{first term Selberg 1/2}\\
& + (q-1)m^{-1/2}\gamma + (q-1)q^{1/2-s}m^{s-1}\frac{(2\pi)^{s-1/2}}{\sqrt{\pi}}\Gamma(1-s)\gamma \label{second term Selberg 1/2} \\
&+ \Ocal_{q,m}\left((s-1/2)\right),
\qquad s\rightarrow 1/2.
\notag
\end{align}
Applying L'H\^ospital's rule,
we see that the term \eqref{first term Selberg 1/2}
tends to
\begin{align*}
m^{-1/2}(q-1)\left(\log q - \log m - \gamma - \log{8\pi}\right)
\end{align*}
as $s\rightarrow 1/2$, where we use $\frac{\Gamma'(1/2)}{\Gamma(1/2)}=-\gamma-2\log 2$. Summing this and the value 
$2m^{-1/2}(q-1) \gamma$ of \eqref{second term Selberg 1/2} at $s=1/2$, and recalling the error term in Theorem \ref{Selberg original}, we obtain
\begin{align*}
\sum_{\chi \in \Fcal_q}|L(1/2,\chi)|^2\chi(m)
= \frac{q-1}{m^{1/2}}(\log q-\log m+\gamma-\log 8\pi)+\Ocal(m q^{1/2}).
\end{align*}
Let $F_m$ denote the main term of the right-hand side above 
and $E_m$ its error term.
As \eqref{FmEm}, we have
\[\frac{1}{\sum_{\chi \in \Fcal_q}|L(1/2, \chi)|^2}\sum_{\chi\in {\Fcal_q}}|L(1/2, \chi)|^2 \chi(m) = \frac{F_m+E_m}{F_1+E_1}=
\frac{F_m}{F_1}+\frac{E_m-\frac{F_m}{F_1}E_1}{F_1+E_1},\]
where $\frac{F_m}{F_1}$ equals
\begin{align*} m^{-1/2}\frac{\log q - \log m + \gamma - \log 8\pi}{\log q + \gamma - \log 8\pi}
&= m^{-1/2}\frac{1 -\frac{\log m}{\log q} +\frac{\gamma-\log{8\pi}}{\log q}}
{1 +\frac{\gamma -\log 8\pi}{\log q}} \\
&= \, m^{-1/2}\left(1-\frac{\log m}{\log q}\right)
+\Ocal\left(\frac{m^{-1/2}}{\log q} \left(1+\frac{\log{m}}{\log{q}}\right)\right).
\end{align*}
Furthermore, we have
\begin{align*}
\frac{E_m-\frac{F_m}{F_1}E_1}{F_1+E_1}
\ll \frac{mq^{1/2}+m^{-1/2}(1+\frac{\log m}{\log q})q^{1/2}}{q\log q}
\ll \frac{m}{q^{1/2}\log{q}}
\end{align*}
and the proof is complete.
\end{proof}

\section{Weighted one-level density}
\label{Weighted one-level density}
In this section, we prove Theorems \ref{main by L(1/2)} and \ref{main by L(s)}.
Let $\phi$ be a Paley-Wiener function on $\CC$.
Then for any $\chi \in \Fcal_q$ with  $q$ being a prime number,
the explicit formula for $L(s, \chi)$ \`a la Weil (see \cite[(2.2)]{HughesRudnick})
yields
\begin{align*}
D(\chi,\phi) = & \,\hat\phi(0) -\frac{1}{\log q} \sum_{p}
(\chi(p)+\overline{\chi(p)})
\hat\phi\left(\frac{\log p}{\log q}\right)\frac{\log p}{p^{1/2}}\\
& -\frac{1}{\log q} \sum_{p}
(\chi(p)^2+\overline{\chi(p)}^2)
\hat\phi\left(\frac{2\log p}{\log q}\right)\frac{\log p}{p}
+\Ocal\left(\frac{1}{\log q}\right), \qquad q\rightarrow \infty,
\end{align*}
where $p$ runs over the set of prime numbers.
For any $s \in [1/2, 1)$, set
\[\Ecal_{q}(s; \phi) := \frac{1}{\sum_{\chi \in \Fcal_q}|L(s, \chi)|^2}\sum_{\chi \in \Fcal_q}
|L(s, \chi)|^2 D(\chi, \phi).\]

\begin{prop}\label{asymptotics L(s)}
Fix $s \in (1/2, 1)$.
Let $\phi$ be a Schwartz function on $\RR$ such that $\supp (\hat\phi) \subset [-2s/3, 2s/3]$. Then we have
	\begin{align*}
	\Ecal_{q}(s; \phi)
	= \hat\phi(0)+\Ocal_s\left(\frac{1}{\log q}\right), \qquad q \rightarrow \infty.
	\end{align*}
\end{prop}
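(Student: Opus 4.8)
The plan is to feed the explicit formula for $D(\chi,\phi)$ into the definition of $\Ecal_q(s;\phi)$ and to average each resulting term against the normalized nonnegative weights $w_\chi:=|L(s,\chi)|^2/\sum_{\chi'\in\Fcal_q}|L(s,\chi')|^2$. By \eqref{twisted second moment} the denominator is $\sim q\,\zeta(2s)\ne 0$ for large $q$, so $\Ecal_q(s;\phi)$ is well defined, and since $\supp(\hat\phi)$ is compact each prime sum is finite, so interchanging $\sum_\chi$ with $\sum_p$ is legitimate. The constant term produces $\hat\phi(0)$, while the $\Ocal(1/\log q)$ remainder in the explicit formula is uniform in $\chi$ and hence contributes $\Ocal(1/\log q)$ to $\Ecal_q(s;\phi)$. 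Thus everything reduces to showing that the weighted averages of the two prime sums are $\Ocal_s(1/\log q)$.

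For the linear sum I apply Proposition \ref{Selberg by L(s)} with $m=p$; its implied constants being uniform in $m$, the resulting bounds may be summed over $p$. Since $q$ is prime and $\tfrac{2s}{3}<1$, the prime $p=q$ lies outside $\supp(\hat\phi)$, so every $p$ that occurs is coprime to $q$. As $|L(s,\chi)|^2$ is real and invariant under $\chi\mapsto\overline\chi$ for real $s$, reindexing by $\chi\mapsto\overline\chi$ shows the weighted average of $\overline{\chi(p)}$ coincides with that of $\chi(p)$; hence the weighted average of $\chi(p)+\overline{\chi(p)}$ equals $2p^{-s}+\Ocal_s(p^{s-1}q^{1-2s})+\Ocal_s(pq^{-s})$. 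The main term contributes $-\tfrac{2}{\log q}\sum_p\hat\phi\!\left(\tfrac{\log p}{\log q}\right)\tfrac{\log p}{p^{1/2+s}}=\Ocal_s(1/\log q)$ because $1/2+s>1$. For the two error contributions I invoke the support bound $p\le q^{2s/3}$ together with $\sum_{p\le X}p^{s-3/2}\log p\ll_s X^{s-1/2}$ and $\sum_{p\le X}p^{1/2}\log p\ll X^{3/2}$ (partial summation and the prime number theorem). With $X=q^{2s/3}$ the first gives $\ll_s\tfrac{1}{\log q}\,q^{1-2s}X^{s-1/2}=\tfrac{1}{\log q}\,q^{(2s-1)(s-3)/3}$, whose exponent is negative for $1/2<s<1$, and the second gives $\ll_s\tfrac{1}{\log q}\,q^{-s}X^{3/2}=\tfrac{1}{\log q}$; both are $\Ocal_s(1/\log q)$.

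For the quadratic sum I apply Proposition \ref{Selberg by L(s)} with $m=p^2$; now $\supp(\hat\phi)\subset[-2s/3,2s/3]$ restricts the sum to $p\le q^{s/3}$. Using $\chi(p)^2+\overline{\chi(p)}^2=\chi(p^2)+\overline{\chi(p^2)}$, its weighted average equals $2p^{-2s}+\Ocal_s(p^{2s-2}q^{1-2s})+\Ocal_s(p^2q^{-s})$. The main term gives $-\tfrac{2}{\log q}\sum_p\hat\phi\!\left(\tfrac{2\log p}{\log q}\right)\tfrac{\log p}{p^{1+2s}}=\Ocal_s(1/\log q)$; the first error gives $\ll_s\tfrac{1}{\log q}\,q^{1-2s}\sum_p p^{2s-3}\log p\ll_s\tfrac{1}{\log q}\,q^{1-2s}$, the sum converging since $2s-3<-1$; the second gives $\ll_s\tfrac{1}{\log q}\,q^{-s}\sum_{p\le q^{s/3}}p\log p\ll_s\tfrac{1}{\log q}\,q^{-s}q^{2s/3}=\tfrac{1}{\log q}\,q^{-s/3}$. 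All are $\Ocal_s(1/\log q)$, and combining everything yields $\Ecal_q(s;\phi)=\hat\phi(0)+\Ocal_s(1/\log q)$.

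The only delicate point is the bookkeeping of the error terms from Selberg's formula. The binding constraint comes from the $\Ocal_s(mq^{-s})$ term with $m=p$ in the linear sum, which requires $X^{3/2}q^{-s}\ll 1$ where $X=q^{\beta}$ is the cutoff attached to $\supp(\hat\phi)\subset[-\beta,\beta]$; this forces $\beta\le 2s/3$, precisely the hypothesis, while every other error term (as well as the whole quadratic sum) leaves room to spare. The assumption $s>1/2$ is used repeatedly, both to make the ``main'' prime sums converge absolutely and to keep the relevant powers of $q$ strictly negative.
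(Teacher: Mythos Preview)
Your proof is correct and follows essentially the same route as the paper: insert the explicit formula, reduce to the two prime sums $M_q^{(k)}$ for $k=1,2$, and bound each using Proposition~\ref{Selberg by L(s)} together with partial-summation estimates for $\sum_{p\le X}p^a\log p$. The only cosmetic difference is that the paper treats $k=1$ and $k=2$ simultaneously with a uniform bound, whereas you split the two cases and obtain slightly sharper (but equally sufficient) estimates for $k=2$; in particular you correctly note that $\sum_p p^{2s-3}\log p$ converges outright, which is a point the paper's appeal to \eqref{esti p log p} glosses over.
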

\begin{proof}Suppose $\supp(\hat\phi)\subset [-\a,\a]$, where $\a>0$
is suitably chosen later.
We recall the expression
\[\Ecal_q(s; \phi) = \hat\phi(0) -M_q^{(1)}(s)-M_q^{(2)}(s)+\Ocal\left(\frac{1}{\log q}\right),\]
where we put
\[M_q^{(k)}(s) := \frac{1}{\log q}\sum_{p}
\frac{1}{\sum_{\chi \in \Fcal_q}|L(s, \chi)|^2}\sum_{\chi \in \Fcal_q}|L(s, \chi)|^2(\chi(p^k)+\overline{\chi(p^k)})
\hat\phi\left(\frac{k\log p}{\log q}\right)\frac{\log p}{p^{k/2}}\]
for $k=1,2$. By Proposition \ref{Selberg by L(s)}, we have
\[M_q^{(k)}(s) = \sum_{p}2\left(\frac{1}{p^{ks}}+\Ocal_s(p^{k(s-1)}q^{1-2s})+\Ocal_{s}(p^kq^{-s})\right)
\hat\phi\left(\frac{k\log p}{\log q}\right)\frac{\log p}{p^{k/2}\log q},
\]
where we use $\sum_{\chi \in \Fcal_q}|L(s,\chi)|^2 \overline{\chi(p^k)}=\sum_{\chi \in \Fcal_q}|L(s, \chi)|^2\chi(p^k)$.
Since we assume $s>1/2$, the contribution of the term with $\frac{1}{p^{ks}}$ is
\begin{align*}\frac{2}{\log q}\sum_{p}\hat\phi\left(\frac{k\log p}{\log q}\right)\frac{\log p}{p^{k(s+1/2)}}\ll \frac{1}{\log q}\sum_p\frac{\log p}{p^{s+1/2}}\ll \frac{1}{\log q}.
\end{align*}
Furthermore,
noting $\supp(\hat \phi) \subset [-\a, \a]$,
the term $\Ocal_{s}(p^kq^{-s})$ is estimated as
\[\frac{2}{\log q}\sum_{p}p^kq^{-s}
\hat\phi\left(\frac{k\log p}{\log q}\right)\frac{\log p}{p^{k/2}}
\ll \frac{q^{-s}}{\log q}\sum_{p\le q^{\a/k}}p^{k/2}\log p.\]
Here, by the prime number theorem and partial summation, we have
\begin{align}\label{esti p log p}\sum_{p\le x}p^a\log p=\Ocal(x^{a+1}), \qquad x \rightarrow \infty\end{align}
for any fixed $a>-1$ (see \cite[(3.4)]{Sugiyama}).
As a consequence, the contribution of $\Ocal_{s}(p^kq^{-s})$ is bounded by
\[\frac{q^{-s}}{\log q}
\times (q^{\a/k})^{k/2+1}\le\frac{q^{-s+3\a/2}}{\log q}\]
up to constant multiple. This is absorbed into $\Ocal(\frac{1}{\log q})$
when $\a$ is taken so that $-s+3\a/2 \le 0$,
i.e., $\a\le 2s/3$.
Similarly,
the contribution of $\Ocal_s(p^{k(s-1)}q^{1-2s})$ is
bounded by
\[\frac{1}{\log q}\sum_{p}p^{k(s-3/2)}q^{1-2s}
\hat\phi\left(\frac{k\log p}{\log q}\right)\log p
\ll \frac{q^{1-2s}}{\log q}\sum_{p\le q^{\a/k}}p^{k(s-3/2)}\log p
\ll \frac{q^{1-2s}}{\log q}\times q^{\a(s-1/2)},\]
which is again absorbed into $\Ocal(\frac{1}{\log q})$
as long as $1-2s+\a(s-1/2)\le 0$, i.e., $\a \le 2$.
Hence $M_q^{(k)}(s)$ for $k=1, 2$ are both bounded by the error and the proof is done.
\end{proof}

Theorem \ref{main by L(s)} immediately follows from
Proposition \ref{asymptotics L(s)}.

\medskip

Next we consider $\Ecal_q(1/2; \phi)$.

\begin{prop}\label{asymptotics L(1/2)}
Let $\phi$ be a Schwartz function on $\RR$ such that $\supp (\hat\phi) \subset [-1/3,1/3]$.
Then we have
\begin{align*}
\Ecal_{q}(1/2; \phi)
	= \hat\phi(0)-\phi(0)+\int_{-\infty}^{\infty}\hat\phi(x)|x|dx +\Ocal\left(\frac{1}{\log q}\right), \qquad q\rightarrow \infty.
	\end{align*}
\end{prop}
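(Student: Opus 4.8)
The plan is to proceed exactly as in the proof of Proposition \ref{asymptotics L(s)}, but now using the sharper asymptotic formula of Proposition \ref{Selberg by L(1/2)} in place of Proposition \ref{Selberg by L(s)}. Starting from
\[\Ecal_q(1/2; \phi) = \hat\phi(0) - M_q^{(1)}(1/2) - M_q^{(2)}(1/2) + \Ocal\!\left(\frac{1}{\log q}\right),\]
I would substitute the formula from Proposition \ref{Selberg by L(1/2)} for each ratio $\frac{1}{\sum_\chi |L(1/2,\chi)|^2}\sum_\chi |L(1/2,\chi)|^2\chi(p^k)$, namely $(p^k)^{-1/2}\bigl(1 - \frac{\log p^k}{\log q}\bigr)$ plus error terms $\Ocal\bigl(\frac{p^{-k/2}}{\log q}(1+\frac{\log p^k}{\log q})\bigr)$ and $\Ocal\bigl(\frac{p^k q^{-1/2}}{\log q}\bigr)$. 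The error contributions are handled just as in Proposition \ref{asymptotics L(s)}: the $\Ocal(p^k q^{-1/2}/\log q)$ term, after summing over $p \le q^{\a/k}$ and using \eqref{esti p log p}, contributes $\ll q^{-1/2 + 3\a/2}/\log q$, which is $\Ocal(1/\log q)$ provided $\a \le 1/3$ (this is the constraint forcing $\supp(\hat\phi)\subset[-1/3,1/3]$); the remaining $\Ocal(\cdots/\log q)$ term contributes $\ll \frac{1}{\log q}\sum_p \frac{\log p}{p} \hat\phi(\tfrac{k\log p}{\log q})(1 + \tfrac{\log p^k}{\log q}) \ll \frac{1}{\log q}$ since $\hat\phi$ is supported in a bounded interval and $\sum_{p\le q^{\a/k}}\frac{\log p}{p}\ll \log q$ while the extra $\tfrac{\log p^k}{\log q}$ factor is $\Ocal(1)$ on the support.

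What remains is the genuine main-term computation: for $k=1,2$,
\[M_q^{(k)}(1/2) = \frac{2}{\log q}\sum_p \hat\phi\!\left(\frac{k\log p}{\log q}\right)\frac{\log p}{p^k}\left(1 - \frac{k\log p}{\log q}\right) + \Ocal\!\left(\frac{1}{\log q}\right).\]
For $k=2$ the sum $\sum_p \frac{\log p}{p^2}$ converges absolutely, so $M_q^{(2)}(1/2) = \Ocal(1/\log q)$ and contributes nothing. For $k=1$, I would split $M_q^{(1)}(1/2)$ into the piece with the factor $1$ and the piece with the factor $-\frac{\log p}{\log q}$. Writing $x = \frac{\log p}{\log q}$, so that $\log p = x\log q$ and $\frac{\log p}{p} = \frac{\log p}{p}$, each piece is a sum over primes weighted by $\hat\phi(x)$ times a slowly varying function of $x$; the standard route is to apply the prime number theorem in the form $\sum_{p\le t}\frac{\log p}{p} = \log t + \Ocal(1)$ (Mertens) together with partial summation against the smooth weight $\hat\phi$. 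This converts $\frac{1}{\log q}\sum_p \hat\phi(\tfrac{\log p}{\log q})\frac{\log p}{p}$ into $\int_0^\infty \hat\phi(x)\,dx + \Ocal(1/\log q)$ and $\frac{1}{\log q}\sum_p \hat\phi(\tfrac{\log p}{\log q})\frac{(\log p)^2}{p\log q}$ into $\int_0^\infty \hat\phi(x)\,x\,dx + \Ocal(1/\log q)$. Hence
\[M_q^{(1)}(1/2) = 2\int_0^\infty \hat\phi(x)\,dx - 2\int_0^\infty \hat\phi(x)\,x\,dx + \Ocal\!\left(\frac{1}{\log q}\right).\]

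Finally I would assemble the pieces and rewrite the integrals using the symmetry of $\hat\phi$ coming from $\phi$ being real-valued even (or more directly, by splitting the integral over $\RR$): since $2\int_0^\infty \hat\phi(x)\,dx = \int_{-\infty}^\infty \hat\phi(x)\,dx = \phi(0)$ by Fourier inversion, and $2\int_0^\infty \hat\phi(x)\,x\,dx = \int_{-\infty}^\infty \hat\phi(x)\,|x|\,dx$, we get $M_q^{(1)}(1/2) = \phi(0) - \int_{-\infty}^\infty \hat\phi(x)\,|x|\,dx + \Ocal(1/\log q)$, and therefore
\[\Ecal_q(1/2;\phi) = \hat\phi(0) - \phi(0) + \int_{-\infty}^\infty \hat\phi(x)\,|x|\,dx + \Ocal\!\left(\frac{1}{\log q}\right),\]
as claimed. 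The main obstacle is the careful bookkeeping in the $k=1$ main-term extraction: one must confirm that the contribution of the $-\frac{\log m}{\log q}$ correction in Proposition \ref{Selberg by L(1/2)} survives in the limit (it does not get swallowed by the error, which is why we need the sharper formula with the logarithmic term rather than the cruder one of Proposition \ref{Selberg by L(s)}), and that the partial-summation error terms are uniformly $\Ocal(1/\log q)$ — this uses that $\hat\phi$ and $\hat\phi'$ are bounded with compact support and that the relevant prime sums are controlled by Mertens' theorem with an explicit error.
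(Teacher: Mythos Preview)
Your argument is correct and follows essentially the same route as the paper: decompose via the explicit formula into $\hat\phi(0)-M_q^{(1)}-M_q^{(2)}+\Ocal(1/\log q)$, insert Proposition~\ref{Selberg by L(1/2)}, control the error contributions using the support restriction $\a\le 1/3$ together with \eqref{esti p log p}, and extract the $k=1$ main term by the prime-sum asymptotics (which the paper quotes from \cite{Sugiyama} rather than rederiving via Mertens and partial summation as you do). Your identifications $2\int_0^\infty\hat\phi(x)\,dx=\phi(0)$ and $2\int_0^\infty x\,\hat\phi(x)\,dx=\int_{\RR}|x|\,\hat\phi(x)\,dx$ use that $\hat\phi$ is even; the paper's quoted formulas rest on the same implicit convention that $\phi$ is even, so your treatment is no less general.
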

\begin{proof}
Suppose $\supp(\hat\phi)\subset [-\a,\a]$, where $\a>0$
is suitably chosen later.
We write $\Ecal_{q}(1/2; \phi)$ as
\[\Ecal_{q}(1/2; \phi)
=\hat\phi(0)
+\Ocal\left(\frac{1}{\log q}\right)-M_q^{(1)}
-M_q^{(2)},\]
where we set
\[M_q^{(k)} :=\frac{1}{\log q}\sum_{p}
\frac{1}{\sum_{\chi \in \Fcal_q}|L(1/2, \chi)|^2}\sum_{\chi \in \Fcal_q}
|L(1/2, \chi)|^2(\chi(p^k) +\overline{\chi(p^k)})\hat\phi\left(\frac{k\log p}{\log q}\right)
\frac{\log p}{p^{k/2}}\]
for $k=1, 2$.
As in the proof of Proposition \ref{asymptotics L(s)}, with the aid of Proposition \ref{Selberg by L(1/2)}, the sum $M_q^{(k)}$ is evaluated as
\begin{align}M_q^{(k)} =\, & \sum_{p}\frac{2}{p^{k/2}}\left(1-\frac{k\log p}{\log q}
+\Ocal\left(\frac{1}{\log q}\left(1+\frac{\log{p}}{\log{q}}\right)\right)
\right)\hat\phi\left(\frac{k\log p}{\log q}\right)\frac{\log p}{p^{k/2}\log q}
\label{main of M}\\
& +\Ocal\left(\frac{q^{-1/2}}{(\log q)^2}\sum_{p}p^{k/2}\hat\phi\left(\frac{k\log p}{\log q}\right)\log p \right). \label{error of M}
\end{align}
Invoking the two asymptotic formulas
\begin{align*}
\sum_{p}\hat\phi\left(\frac{\log p}{\log q}\right)\frac{\log p}{p\log q}
=&\, \frac{1}{2}\phi(0) +\Ocal\left(\frac{1}{\log q}\right), \qquad q\rightarrow \infty,
\end{align*}
\begin{align*}
\sum_{p}\hat\phi\left(\frac{\log p}{\log q}\right)\frac{(\log p)^2}{p(\log q)^2}
=& \,\frac{1}{2}\int_{-\infty}^{\infty}\hat\phi(x)|x|dx+\Ocal\left(\frac{1}{\log q}\right), \qquad q\rightarrow \infty
\end{align*}
(see the proof of \cite[Proposition 3.2]{Sugiyama}),
the term \eqref{main of M} for $k=1$ is evaluated as
\[\phi(0)-\int_{-\infty}^{\infty}\hat\phi(x)|x|dx+\Ocal\left(\frac{1}{\log q}\right).\]
The term \eqref{main of M} for $k=2$ is bounded by
$$\frac{1}{\log q}\sum_{p}\frac{(\log p)^2}{p^2}\ll \frac{1}{\log q}.$$
The error term \eqref{error of M} for $k=1, 2$ is estimated in the following way.
Noting $\supp(\hat \phi) \subset [-\a, \a]$
and \eqref{esti p log p}, the error term \eqref{error of M} for $k=1, 2$ is bounded by
\begin{align*}\frac{q^{-1/2}}{(\log q)^2}\sum_{p\le q^{\a/k}}p^{k/2}\log p\ll \frac{q^{-1/2}}{(\log q)^2}\times q^{(\a/k)(k/2+1)} \le \frac{q^{3\a/2-1/2}}{(\log q)^2}.
\end{align*}
Therefore it suffices to take $\a$ so that $3\a/2-1/2\le 0$, i.e., $\a\le 1/3$
and we are done.
\end{proof}

A direct computation shows that the density function of the main term of Proposition \ref{asymptotics L(1/2)} is equal to $W_{\U}^{1}(x)$.
We state this fact as a proposition and provide the proof for convenience of the reader.

\begin{prop}\label{density}
Let $\phi$ be a Schwartz function on $\RR$
such that $\supp(\hat\phi) \subset [-1, 1]$.
Then we have
\begin{align*}
\hat\phi(0)-\phi(0)+\int_{-\infty}^{\infty}\hat\phi(x)|x|dx	
=\int_{-\infty}^{\infty}\phi(x)W_{\rm U}^1(x)dx=
\int_{-\infty}^{\infty}\phi(x)\left(1-\frac{\sin^2(\pi x)}{(\pi x)^2}\right)dx.
\end{align*}
\end{prop}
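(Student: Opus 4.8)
The plan is to recognize $W_{\rm U}^1$ as a Fourier transform and to push the integration to the frequency side, where the support hypothesis $\supp(\hat\phi)\subset[-1,1]$ trivializes everything. Write $W_{\rm U}^1(x)=1-K(x)$ with $K(x):=\sin^2(\pi x)/(\pi x)^2$. The first ingredient I would record is the classical Fourier pair
\[\int_{-1}^{1}(1-|\xi|)\,e^{2\pi i x\xi}\,d\xi=\left(\frac{\sin\pi x}{\pi x}\right)^2=K(x),\]
that is, $K=\hat T$, where $T(\xi):=\max(1-|\xi|,0)$ is the triangle (Fej\'er) function supported on $[-1,1]$. This is a one-line computation: split the integral into $2\int_0^1(1-\xi)\cos(2\pi x\xi)\,d\xi$ and integrate by parts once.

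The second step is the multiplication formula. Since $\phi$ is Schwartz and $T\in L^1(\RR)$ is continuous with compact support, no convergence issue arises, and one gets
\[\int_{-\infty}^{\infty}\phi(x)K(x)\,dx=\int_{-\infty}^{\infty}\phi(x)\hat T(x)\,dx=\int_{-\infty}^{\infty}\hat\phi(\xi)\,T(\xi)\,d\xi=\int_{-1}^{1}\hat\phi(\xi)(1-|\xi|)\,d\xi.\]
Now I would invoke $\supp(\hat\phi)\subset[-1,1]$ to rewrite $\int_{-1}^{1}\hat\phi(\xi)\,d\xi=\int_{-\infty}^{\infty}\hat\phi(\xi)\,d\xi=\phi(0)$ by Fourier inversion, and likewise $\int_{-1}^{1}\hat\phi(\xi)|\xi|\,d\xi=\int_{-\infty}^{\infty}\hat\phi(\xi)|\xi|\,d\xi$, whence
\[\int_{-\infty}^{\infty}\phi(x)K(x)\,dx=\phi(0)-\int_{-\infty}^{\infty}\hat\phi(\xi)|\xi|\,d\xi.\]

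Finally, combining this with $\int_{-\infty}^{\infty}\phi(x)\,dx=\hat\phi(0)$ yields
\[\int_{-\infty}^{\infty}\phi(x)W_{\rm U}^1(x)\,dx=\hat\phi(0)-\int_{-\infty}^{\infty}\phi(x)K(x)\,dx=\hat\phi(0)-\phi(0)+\int_{-\infty}^{\infty}\hat\phi(\xi)|\xi|\,d\xi,\]
which is exactly the claimed identity (read right to left). There is no genuine obstacle here: the only points needing care are fixing the Fourier normalization consistently with the paper's convention $\hat\phi(\xi)=\int_{-\infty}^{\infty}\phi(x)e^{-2\pi i\xi x}\,dx$ so that the triangle function is supported precisely on $[-1,1]$, and justifying the multiplication formula, which is immediate since $K\in L^1(\RR)$ and $T$ is continuous and compactly supported.
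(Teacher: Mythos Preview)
Your proof is correct and rests on the same mechanism as the paper's: pass to the frequency side via the multiplication formula and exploit $\supp(\hat\phi)\subset[-1,1]$. The only difference is in packaging. The paper multiplies $\hat\phi$ by the indicator $\eta$ of $[-1,1]$, computes the two Fourier transforms $\hat\eta(x)=\tfrac{\sin 2\pi x}{\pi x}$ and $\widehat{|\cdot|\eta}(x)=\tfrac{\sin 2\pi x}{\pi x}-\tfrac{\sin^2\pi x}{(\pi x)^2}$ separately, and then observes that the $\tfrac{\sin 2\pi x}{\pi x}$ terms cancel. You combine these from the outset into the triangle function $T=\eta-|\cdot|\eta$, whose transform is the Fej\'er kernel $K$, so no intermediate cancellation appears. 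Your route is slightly cleaner for that reason; the paper's route has the minor advantage of making visible which piece of the identity comes from $\phi(0)$ and which from $\int\hat\phi(x)|x|\,dx$ before combining.
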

\begin{proof}Noting the assumption $\supp(\hat\phi) \subset [-1, 1]$,
an elementary calculation using Fourier analysis
yields
\[\hat\phi(0)=\int_{-\infty}^{\infty}\phi(x)dx,\]
\[\phi(0)=\int_{-\infty}^{\infty}\hat\phi(x)dx=\int_{-\infty}^{\infty}\hat\phi(x)\eta(x)dx = \int_{-\infty}^{\infty} \phi(x) \hat{\eta}(x)dx,\]
\[\int_{-\infty}^{\infty}\hat\phi(x)|x|dx=\int_{-\infty}^{\infty}\hat\phi(x)|x|\eta(x)dx = \int_{-\infty}^{\infty} \phi(x) \widehat{|\cdot|\eta}(x)dx,\]
where $\eta(x)=1, 1/2$ or $0$ if $|x|<1$, $|x|=1$ or $|x|>1$, respectively.
We see $\hat\eta(x)=2\frac{\sin (2\pi x)}{2\pi x}$
by a direct computation
and furthermore, we have
\begin{align*}\widehat{|\cdot|\eta}(\xi) = 
\int_{-1}^1|x|e^{2\pi ix\xi}dx
=\int_{-1}^{0}(-x)e^{2\pi i x\xi}dx+\int_{0}^{1}xe^{2\pi i x\xi}dx
= 2\frac{\sin(2\pi\xi)}{2\pi\xi}-\frac{\sin^2(\pi\xi)}{(\pi\xi)^2}.
\end{align*}
Hence, the left-hand side of the assertion is transformed into
\begin{align*}
\int_{-\infty}^{\infty}\phi(x)\left\{1-2\frac{\sin (2\pi x)}{2\pi x}
+\left(2\frac{\sin(2\pi x)}{2\pi x}-\frac{\sin^2(\pi x)}{(\pi x)^2}\right)\right\}dx.
\end{align*}
This completes the proof.
\end{proof}

Theorem \ref{main by L(1/2)} follows from
Propositions \ref{asymptotics L(1/2)} and \ref{density}.

\begin{rem} \label{rmk_supp}
If the error term of Selberg's formula of the twisted second moment of Dirichlet $L$-functions in Theorem \ref{Selberg original} is improved to $\Ocal(m^aq^{-b})$ for some $a>0$ and $b>0$,
then the assumption on the size of $\supp(\hat\phi)$ in Theorem \ref{main by L(1/2)}
can be weakened to $\supp(\hat\phi)\subset [-\frac{b}{a+1/2}, \frac{b}{a+1/2}]$.
On the other hand, the assumption of the size of $\supp(\hat\phi)$ in Theorem \ref{main by L(s)} can be weakenend only up to $\supp(\hat\phi)\subset[-2,2]$ even if
the error term of Selberg's formula is improved to $\Ocal(m^aq^{-b})$ for some $a>0$ and any sufficiently large $b>0$. This is because the main term of Selberg's formula yields the error term $\Ocal_s(m^{s-1}q^{1-2s})$ in Proposition \ref{Selberg by L(s)}, which requires the assumption $\supp(\hat\phi)\subset[-2, 2]$
as in the last part of the proof of Proposition \ref{asymptotics L(s)}.
\end{rem}

\begin{rem}
Selberg's result {\rm(}Theorem \ref{Selberg original}{\rm)} has been improved by many authors for the case of central point, that is when $s=s'=1/2$.
The latest and currently best known result is the following result by Bettin \cite[Corollary 2]{Bettin}{\rm:}
\begin{align*}
\sum_{\chi\in {\Fcal_q}} |L(1/2,\chi)|^2 \chi(m)\overline{\chi(n)}
= \frac{q-1}{(mn)^{1/2}}\left(\log\frac{q}{mn} + \gamma - \log8\pi \right)
+ \Ocal\left((m+n)^{1/2}q^{1/2}\log{q}\right),
\end{align*}
where $q$ is a prime number, $m$ and $n$ are positive integers such that $m$ and $n$ are coprime to each other and such that $q\ge4mn$.
Using the above result, we have
\begin{align*}
&\frac{1}{\sum_{\chi \in \Fcal_q}|L(1/2, \chi)|^2}\sum_{\chi\in {\Fcal_q}}|L(1/2, \chi)|^2 \chi(m) \\
&= m^{-1/2}-m^{-1/2}\frac{\log m}{\log q} +\Ocal\left(\frac{m^{-1/2}}{\log q}\left(1+\frac{\log{m}}{\log q}\right)\right) +\Ocal \left(m^{1/2}q^{-1/2}\right), \quad q \rightarrow \infty.
\end{align*}
This is the case when $a$ and $b$ in Remark \ref{rmk_supp} are $1/2$, and thus the support condition in Theorem \ref{main by L(1/2)} can be improved
from $[-1/3,1/3]$ to $[-1/2,1/2]$.
\end{rem}

\section*{Acknowledgement}

We thank Sandro Bettin for helpful comments and discussions, and for introducing useful references.
This work was supported by JSPS KAKENHI Grant Numbers 20K14298, 18K13400 and 22K13895.
The second author was also supported by MEXT Initiative for Realizing Diversity in the Research Environment.


\end{document}